\newtheorem{thm}{Theorem}
\newtheorem{lem}{Lemma}
\newtheorem{cor}{Corollary}
\newtheorem{exm}{Example}
\newcommand{\Z}{\mathbb Z}
\author{Thomas Bier\\
\small Dept of Mathematics \& Statistics\\
\small Sultan Qaboos University\\
\small P.O.Box 36, Al-Khoud 123. \\
\small Muscat, Sultanate of Oman\\
{\tt \small tbier@squ.edu.om}
\small \and Imed Zaguia\\
\small Dept of Mathematics \& Computer Science\\
\small Royal Military College of Canada\\
\small P.O.Box 17000, Station Forces \\
\small K7K 7B4 Kingston, Ontario, Canada \\
{\tt \small imed.zaguia@rmc.ca }
}
\begin{document}

\title{Some inequalities for orderings of acyclic digraphs}
\maketitle

\begin{abstract}

Let $D=(V,A)$ be an acyclic digraph. For $x\in V$ define $e_{_{D}}(x)$ to be the difference of the indegree and the outdegree of $x$. An acyclic ordering of the vertices of $D$ is a one-to-one map
$g: V \rightarrow [1,|V|] $ that has the property that for all $x,y\in V$ if $(x,y)\in A$, then $g(x) < g(y)$.

We prove that for every acyclic ordering $g$ of $D$ the following inequality holds:
\[\sum_{x\in V} e_{_{D}}(x)\cdot g(x) ~\geq~ \frac{1}{2} \sum_{x\in V}[e_{_{D}}(x)]^2~.\]
The class of acyclic digraphs for which equality holds is determined as the class of comparbility digraphs of posets of order dimension two.
\newline \textbf{Keywords:} (partially) ordered set, digraph, acyclic ordering, linear extension, order dimension two.
\newline \textbf{AMS subject classification (2000): 05C20, 06A05, 06A06, 06A07}
\end{abstract}

\input{epsf}

\section{An Inequality for Acyclic Digraphs}

A \emph{directed graph} (or just \emph{digraph}) $D$ consists of a
nonempty \emph{finite} set $V (D)$ of elements called \emph{vertices} and
a finite set $A(D)$ of ordered pairs of distinct vertices called
\emph{arcs}. We call $V (D)$ the \emph{vertex set} and $A(D)$ the
\emph{arc set} of $D$. We will often write $D = (V,A)$ which means
that $V$ and $A$ are the vertex set and arc set of $D$,
respectively. If $X$ is a subset of $V$, the pair
$D[X]:=(X, A\cap (X\times X))$ is the \emph{digraph
induced by $D$ on $X$}. A digraph $D$ is \emph{acyclic} if it has no directed cycle. In
this paper all digraphs are acyclic and simple in the sense that
they have no multiple arcs. For any other terminology on
digraphs we refer the reader to \cite{bang}.

A \emph{poset} $P=(V,<)$ is a set $V$ equipped with a binary relation $<$
on $V$ which is irreflexive (i.e., $x\not < x$ for all $x\in V$), antisymmetric and transitive.
To a poset $P=(V,<)$ we can associate a digraph $D(P)=(V,A)$, called the \emph{comparability digraph} of $P$, as follows. For two distinct vertices $x,y\in V$ we let $(x,y)\in A$ if $x<y$. We should mention that to an acyclic
digraph $D=(V,A)$ we can associate a poset by taking the \emph{transitive closure},
that is, the smallest binary relation on $V$ which is irreflexive, antisymmetric and transitive containing~$A$.\\

Assume that $D=(V,A)$ is an acyclic digraph. We define for $ x \in V$
\begin{equation}
N^+(x)  = \{ z \in V ~:~ (x,z) \in A \} \mbox{ and } N^-(x) = \{ z \in V : (z,x) \in A \},
\end{equation}
and let
\begin{equation} \label{E1}
 e_{_{D}}(x) = |N^-(x)|-|N^+(x)|.
\end{equation}
Every edge of a digraph goes in and comes out somewhere so we get

\begin{equation} \label{E3}
 \sum_{x\in V} e_{_{D}}(x) = \sum_{x\in V} |N^-(x)|-|N^+(x)|=0.
\end{equation}

Let $D$ be a digraph and let $x_1, x_2,\cdots, x_n$ be an ordering
of its vertices. We call this ordering an \emph{acyclic ordering}
if, for every arc $(x_i,x_j)$ in $D$, we have $i < j$. Since no directed cycle
has an acyclic ordering, no digraph with a directed cycle has an acyclic
ordering. On the other hand, every acyclic digraph has an acyclic ordering of
its vertices \cite{szp}.
Any acyclic ordering of the acyclic digraph $D=(V,A)$ defines a function
$g: V \rightarrow [1,|V|] $ by letting $g(x_i)=i$ for all $i\in [1,|V|]$. The function $g$ has the property that for all $x,y\in V$ if $(x,y)\in A$, then $g(x) < g(y)$. Conversely, any 1-1 function with this property
defines an acyclic ordering.

On the other hand we have the canonical Euclidean inner product
\begin{equation}  \label{E8}
 \langle e_{_{D}},g \rangle ~:=~ \sum_{x \in V} ~ e_{_{D}}(x) \cdot g(x) ~ \in ~ \Z ~.
\end{equation}

A \emph{linear extension} of a poset $P=(V,<)$ is an acyclic ordering of its
comparability digraph $D(P)$. The poset $P=(V,<)$
is said to have \emph{dimension two} if there are two distinct linear extensions
$f$ and $g$ such that for all $x,y\in V$, $x<y$ if and only if $f(x)<f(y)$ and $g(x)<g(y)$. In this case we write $P=f\cap g$.

Let $P=(V,<)$ be a poset of dimension two with $|V|=n$ and $D(P)$ be its comparability digraph and let $f$ and $g$ be two linear extensions of $P$ so that $P=f\cap g$. Then the following equality holds
\begin{equation}\label{E40}
e_{_{D(P)}}=f+g-(n+1).
\end{equation}
Indeed, for $x\in V$ the quantity $f(x)-(|N^-(x)|+1)$ counts the number of elements $v$ of $V$ such that $f(v)<f(x)$ and $v\not \in N^-(x)\cup\{x\}$. On the other hand the quantity
$n-(g(x)+|N^+(x)|)$ counts the number of elements $v$ of $V$ such that $g(v)>g(x)$ and $v\not \in N^+(x)\cup\{x\}$. Since $P=f\cap g$ we infer that these two quantities must be equal, that is, $e_{D(P)}(x)-f(x)-g(x)+(n+1)=0$ for all $x\in V$ as required.

A consequence of equality (\ref{E40}) is this: if $P$ has dimension at most two, then $P$ has a linear extension $g$ satisfying the equality
\begin{equation}\label{equalposet}
\langle e_{D(P)},g \rangle ~=~ \frac{1}{2}\langle e_{D(P)},e_{D(P)} \rangle.
\end{equation}
If $P$ is a total order, then consider $P$ itself as its linear extension, say $g$. If $n$ is even, then
$\langle e,g \rangle = 1\cdot (n-1)+2\cdot(n-3)+\cdot \cdot \cdot + n/2\cdot(-1)+(n/2+1)\cdot1+(n/2+2)\cdot3+\cdot \cdot \cdot +n\cdot(1-n) = (1-n)^2+(3-n)^2+(5-n)^2+\cdot \cdot \cdot +(-3)^2+(-1)^2$, and $\langle e,e \rangle= 2[(1-n)^2+(3-n)^2+(5-n)^2+\cdot \cdot \cdot + (-3)^2+(-1)^2].$
Similarly, when $n$ is odd we can obtain the equality.

If $P$ has dimension 2, then let $f$ and $g$ be linear extensions satisfying $f \cap g = P$. Then $e_{_{D(P)}}=f + g -(n + 1)$,  and since obviously $f$ and $g$ are acyclic digraphs, $\langle g,e_D \rangle$ is well-defined and in fact
\begin{eqnarray*}
\langle g,e_{_{D(P)}} \rangle &=& \langle g,f \rangle + \langle g,g \rangle -(n+1)\sum_{x\in V} g(x)\\
&=&\langle g,f \rangle + \langle g,g \rangle -\frac{n(n+1)^2}{2}.
\end{eqnarray*}
On the other hand we have
\begin{align*}
\langle e_{_{D(P)}},e_{_{D(P)}} \rangle &= \langle f + g -(n + 1),f + g -(n + 1)   \rangle\\
&= \langle f ,f \rangle + \langle f ,g \rangle -(n+1)\sum_{x\in V} f(x) + \langle g ,f \rangle + \langle g ,g \rangle -(n+1)\sum_{x\in V} g(x) \\
&\qquad {}\; \; \; \; \; \; \;\;\;\;\;\;\;\;\;\;\;\;\,-(n+1)\sum_{x\in V} f(x) -(n+1)\sum_{x\in V} g(x) +n(n+1)^2\\
&=2 \langle g ,f \rangle +2 \langle g ,g \rangle -4(n+1)\sum_{x\in V} g(x) +n(n+1)^2.
\end{align*}
Hence,
\begin{eqnarray*}
\frac{1}{2}\langle e_{_{D(P)}},e_{_{D(P)}} \rangle &=& \langle g ,f \rangle + \langle g ,g \rangle -2(n+1)\frac{n(n+1)}{2} +\frac{n(n+1)^2}{2}\\
&=& \langle g ,f \rangle + \langle g ,g \rangle -\frac{n(n+1)^2}{2}.
\end{eqnarray*}

For posets of dimension larger than 2, the first author proved in \cite{bier} that the following inequality holds
\begin{equation*}\label{inequalposet}
\langle e_{D(P)},g \rangle ~\geq~ \frac{1}{2}\langle e_{D(P)},e_{D(P)} \rangle.
\end{equation*}

On the other hand the results in \cite{bier} for the lower bound do hold for the case of acyclic digraphs also, but this needs to be checked carefully. It is the main object of this paper to provide the details of such a check.

\begin{thm} \label{T1} Let $D=(V,A)$ be an acyclic digraph. Assume
that $g:V \rightarrow [1,|V|] $ is an acyclic ordering of $D$.
Then we have the inequality
\begin{equation}  \label{E9}
 \langle e_{_{D}},g \rangle ~\geq~ \frac{1}{2}\langle e_{_{D}},e_{_{D}} \rangle~.
\end{equation}
\end{thm}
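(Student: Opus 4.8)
The plan is to translate both sides of $(\ref{E9})$ into the language of arcs and of ``cuts'' of the acyclic ordering, and then to settle the resulting combinatorial inequality by a charging argument. Write $d^-(x)=|N^-(x)|$ and $d^+(x)=|N^+(x)|$, and fix the ordering so that $g(x_i)=i$. First I would rewrite the left-hand side: interchanging the sum over vertices for a sum over arcs gives
\[
\langle e,g\rangle=\sum_{x\in V}\big(d^-(x)-d^+(x)\big)g(x)=\sum_{(a,b)\in A}\big(g(b)-g(a)\big),
\]
and since $g$ is an acyclic ordering each summand is at least $1$. Setting $C_k$ equal to the number of arcs $(x_i,x_j)$ with $i\le k<j$ (the arcs crossing the cut after the $k$-th vertex), one obtains $\langle e,g\rangle=\sum_{k=1}^{n-1}C_k$ with every $C_k\ge 0$.

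Next I would expand the right-hand side. Writing $e(x)^2=(d^-(x)-d^+(x))^2$ and summing, the diagonal contributes $\sum_x(d^-(x)+d^+(x))=2m$ (with $m=|A|$), while the cross terms group according to how two arcs meet at a common vertex, yielding
\[
\tfrac12\langle e,e\rangle=m+S_h+S_t-S_p,
\]
where $S_h=\sum_v\binom{d^-(v)}{2}$ counts pairs of arcs sharing a head, $S_t=\sum_v\binom{d^+(v)}{2}$ counts pairs sharing a tail, and $S_p=\sum_v d^-(v)d^+(v)$ counts directed two-paths through a common middle vertex. Thus the theorem becomes the purely combinatorial statement $\sum_{k=1}^{n-1}C_k\ge m+S_h+S_t-S_p$; writing $\ell_\alpha$ for the span $g(b)-g(a)$ of an arc $\alpha=(a,b)$ and noting $\sum_k C_k=\sum_\alpha\ell_\alpha$, this is the same as
\[
I:=\sum_{\alpha\in A}(\ell_\alpha-1)\ \ge\ S_h+S_t-S_p,
\]
where $I$ is exactly the number of incidences (arc $\alpha$, vertex lying strictly between the endpoints of $\alpha$).

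The heart of the argument is a charging map from the head- and tail-sharing pairs into these incidences. For a head-sharing pair $\{(x_a,v),(x_{a'},v)\}$ with $a<a'$, the tail $x_{a'}$ of the shorter arc lies strictly inside the longer arc $(x_a,v)$, so I charge the pair to the incidence $((x_a,v),x_{a'})$; tail-sharing pairs are charged symmetrically to the interior head of the shorter arc. A given incidence $(\alpha,x_c)$ with $\alpha=(x_s,x_t)$ receives a charge from a head-sharing pair precisely when $(x_c,x_t)\in A$ and from a tail-sharing pair precisely when $(x_s,x_c)\in A$, so it is charged at most twice, and exactly twice only when both $(x_s,x_c)$ and $(x_c,x_t)$ are arcs, i.e. exactly when $x_s\to x_c\to x_t$ is a directed two-path. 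Since its middle vertex and endpoints recover that two-path uniquely, the doubly-charged incidences inject into the set counted by $S_p$. Counting charges then gives $S_h+S_t\le I+S_p$, which is the desired inequality.

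I expect the main obstacle to be precisely this last step. Naive approaches fail: an Abel summation reduces matters to $\sum_k(C_k-C_{k-1})^2\le 2\sum_kC_k$, but this is false termwise and even false for arbitrary nonnegative sequences with $C_0=C_n=0$ (a single large jump breaks it), so the realizability of the $C_k$ as crossing numbers of an \emph{actual} digraph must be used in an essential way. The charging argument is what encodes that realizability, and the delicate point is that the unavoidable double-counting of incidences is compensated exactly by the subtracted two-path term $-S_p$; making the injection of doubly-charged incidences into two-paths line up is the crux. As a byproduct the argument should also expose the equality case (every interior incidence must be charged, forcing a transitively closed ``interval'' structure), which is presumably the source of the connection to order dimension two announced in the abstract.
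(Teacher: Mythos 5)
Your proof is correct, but it is genuinely different from the paper's. The paper proves Theorem \ref{T1} by induction on $|V|$: it removes the vertex $z$ with $g(z)=n+1$, compares the $e$-vectors of $D_1$ and $D_1[V\setminus\{z\}]$, and controls the change in the two inner products via a separate lemma on ``insertion pairs'' of the set $S=\{g(x):x\in N^-(z)\}$, which bounds $\sum_{x\in N^-(z)}[e(x)-g(x)]$ from below. You instead give a direct, non-inductive double count: your identity $\tfrac12\langle e,e\rangle=m+S_h+S_t-S_p$ is exact (the diagonal of $e(x)^2$ gives $2m$, the square terms give head/tail-sharing pairs, the cross terms give two-paths), and your reduction of the theorem to $I\geq S_h+S_t-S_p$, where $I$ counts arc--interior-vertex incidences, is also exact since $\langle e,g\rangle=\sum_\alpha \ell_\alpha$ by the paper's own identity (\ref{EW}). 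The charging map is sound: each head-sharing (resp.\ tail-sharing) pair injects into incidences, an incidence $((x_s,x_t),x_c)$ receives at most one charge of each kind, and doubly-charged incidences inject into directed two-paths, giving $S_h+S_t\leq I+S_p$. Your side remark that the crossing-sequence formulation $\sum_k(C_k-C_{k-1})^2\leq 2\sum_k C_k$ fails for non-realizable sequences is also accurate and explains why simplicity of the digraph must enter. What the two approaches buy: the paper's induction machinery (Lemma \ref{lem1} and the removal of the top element) is reused verbatim in Lemma \ref{L3} and the proof of Theorem \ref{T2}, so the equality characterization comes almost for free from the same setup; your argument is more self-contained and makes both sides of the inequality combinatorially transparent, and your observation that equality forces every incidence to be charged and every two-path to be ``closed'' by an arc (i.e.\ transitivity) is the correct germ of Theorem \ref{T2}, but you would need to develop that analysis separately to recover the dimension-two conclusion.
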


The next theorem characterizes the digraphs satisfying equality in (\ref{E9}).

\begin{thm} \label{T2} Let $D=(V,A)$ be an acyclic digraph with $n=|V|$ for which there exists an acyclic ordering $g: V\rightarrow [1,n]$ that satisfies the equality.

\begin{equation}  \label{E8'}
 \langle e_{_{D}},g \rangle ~=~ \frac{1}{2}\langle e_{_{D}},e_{_{D}} \rangle~.
\end{equation}

Then $D$ is the comparability digraph of a poset of dimension at most two, $f=n+1-g+e_{_{D}}$ is a linear extension of $D$ and $D=f\cap g$.
\end{thm}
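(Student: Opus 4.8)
The plan is to exploit the algebraic rigidity hidden in the scalar equality $(\ref{E8'})$ together with the combinatorial content of the inequality in Theorem~\ref{T1}. Throughout I relabel the vertices as $v_1,\dots,v_n$ so that $g(v_i)=i$, and abbreviate $e_i=e(v_i)$, $p_i=|N^-(v_i)|$, $q_i=|N^+(v_i)|$, so that $e_i=p_i-q_i$. The candidate conjugate ordering is $f=n+1-g+e$, i.e. $f(v_i)=n+1-i+e_i$; I would first record the self-dual identity $f(x)+g(x)=n+1+e(x)$ and, using $\sum_x e(x)=0$ from $(\ref{E3})$, the scalar identity $\langle e,f\rangle=\langle e,e\rangle-\langle e,g\rangle$. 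Hence $(\ref{E8'})$ is equivalent to $\langle e,f\rangle=\langle e,g\rangle=\tfrac12\langle e,e\rangle$. This symmetric reformulation is the natural starting point, since it already signals that $f$ should play the same role for $D$ that $g$ does.

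Next I would establish the features of $f$ that hold for every acyclic ordering, before invoking equality. Because the in-neighbours of $v_i$ precede it and its out-neighbours follow it, $0\le p_i\le i-1$ and $0\le q_i\le n-i$; substituting into $f(v_i)=n+1-i+p_i-q_i$ yields $1\le f(v_i)\le n$ for all $i$, while $\sum_i f(v_i)=n(n+1)-\binom{n+1}{2}=\binom{n+1}{2}$. Thus $f$ maps $V$ into $[1,n]$ with the correct total, so $f$ is a bijection onto $[1,n]$ as soon as it is injective. This reduces Theorem~\ref{T2} to three assertions: that equality forces $f$ to be injective, that equality forces $f$ to respect every arc, and that the resulting pair $(f,g)$ realizes $D$.

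The heart of the argument is to read these off from the equality case of Theorem~\ref{T1}. I would re-examine that proof through the cut/crossing description $\langle e,g\rangle=\sum_{j=1}^{n-1}b_j$, where $b_j$ counts the arcs $(x,y)$ with $g(x)\le j<g(y)$; since $e_i=b_{i-1}-b_i$ (with $b_0=b_n=0$), one gets $\tfrac12\langle e,e\rangle=\sum_j b_j^2-\sum_j b_jb_{j+1}$, so $(\ref{E8'})$ becomes the tight instance of $\sum_{j}b_j(b_j-1)\le\sum_{j}b_jb_{j+1}$. From this tightness I would extract, arc by arc, that for each arc $(v_i,v_j)$ the displacement satisfies $e_j-e_i>j-i$, i.e. $f(v_i)<f(v_j)$ (equivalently $h(v_i)>h(v_j)$ for $h:=g-e$, so $f(v_i)=n+1-h(v_i)$); this simultaneously forbids the ties $f(v_i)=f(v_j)$ and gives the arc-respecting property, so with the range and count facts $f$ becomes a linear extension. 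Finally, $f$ and $g$ both being linear extensions gives $A\subseteq\{(x,y):g(x)<g(y),\ f(x)<f(y)\}$, and I would use tightness once more to exclude an \emph{agreeing non-arc} $x,y$ (a pair comparable in both $f$ and $g$ but carrying no arc, the configuration responsible for the strict loss in the skinny chain). This yields the reverse inclusion, hence $D=f\cap g$; since an intersection of two linear orders is automatically transitive, $D$ is a poset, and by definition it has dimension at most two.

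The step I expect to be the genuine obstacle is exactly the passage from the single scalar equation $(\ref{E8'})$ to the rigid per-arc inequalities $e_j-e_i>j-i$ and to the exclusion of agreeing non-arcs. The quantity $\langle e,g\rangle-\tfrac12\langle e,e\rangle$ does not split into manifestly nonnegative per-vertex contributions — for a three-element chain the natural vertex contributions are $-2,\,4,\,-2$ — so its nonnegativity and its equality case are genuinely global and must be tracked through the crossing-number identity rather than termwise. Everything else, namely the identity $f=n+1-g+e$, the range bound $1\le f\le n$, and the final realizer bookkeeping, is routine once this rigidity is in hand.
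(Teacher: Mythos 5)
Your preliminary bookkeeping is all correct: the identity $\langle e,f\rangle=\langle e,e\rangle-\langle e,g\rangle$ (via $\sum_x e(x)=0$), the bounds $1\le f(v_i)\le n$ from $0\le p_i\le i-1$ and $0\le q_i\le n-i$, the total $\sum_i f(v_i)=\binom{n+1}{2}$, and the cut reformulation $\langle e,g\rangle=\sum_j b_j$, $\tfrac12\langle e,e\rangle=\sum_j b_j^2-\sum_j b_jb_{j+1}$ with $e_i=b_{i-1}-b_i$. But the proof has a genuine gap, and it sits exactly where you flag it: nothing in the proposal actually derives, from the single scalar equality (\ref{E8'}), either the per-arc inequalities $e_j-e_i>j-i$ or the exclusion of agreeing non-arcs. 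Those two statements, taken together, essentially \emph{are} the theorem ($f$ is a linear extension and $D=f\cap g$), so translating them into the cut language and asserting they ``should follow from tightness'' is a restatement of the goal, not an argument. Moreover, the extraction cannot be done cut-by-cut: for the digraph with vertices $v_1,v_2,v_3$ and arcs $(v_1,v_3),(v_2,v_3)$ one has $b=(1,2)$, so at the cut $j=2$ the local inequality $b_j(b_j-1)\le b_jb_{j+1}$ fails ($2\not\le 0$) even though global equality holds (here $e=(-1,-1,2)$, $\langle e,g\rangle=3=\tfrac12\langle e,e\rangle$, and $D$ is indeed a two-dimensional poset). So the tight instance involves compensation across different cuts, and some global mechanism is indispensable; your proposal supplies none. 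A smaller but real secondary gap: the per-arc inequalities only forbid ties $f(x)=f(y)$ along arcs, and the range and sum facts do not force injectivity on non-arc pairs (e.g.\ the multiset $\{1,2,2,5\}$ has the right range and total for $n=4$), so injectivity of $f$ also needs an argument you do not give.

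For comparison, the mechanism the paper uses is inductive localization rather than a direct analysis of the tight global inequality. Lemma \ref{L3} shows that equality is inherited when one deletes the vertex $z$ with $g(z)=n+1$ (because the chain of inequalities in its proof is pinched between equal endpoints), and as a byproduct the inequality of Lemma \ref{lem1} must itself hold with equality, i.e.\ $\sum_{x\in N^-(z)}[e(x)-g(x)]+n\,|N^-(z)|=\binom{|N^-(z)|}{2}$. This equality case is what rigidifies the structure near $z$: it forces the $F$-images of $N^-(z)$ to be exactly the interval $[1,|N^-(z)|]$, with $F(z)=|N^-(z)|+1$ and everything else above, and from there the verification that $F=n+2-G+E$ is a bijective acyclic ordering and that $D_1=F\cap G$ is finite case-checking on top of the inductive hypothesis. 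If you want to salvage your non-inductive plan, you would need a substitute for this step --- some argument showing how global tightness of $\sum_j b_j(b_j-1)\le\sum_j b_jb_{j+1}$ propagates structural conclusions to every arc and every non-arc --- and that is precisely the content you have left unproven.
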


\begin{exm}Consider the directed graph $D$ depicted in Figure \ref{skinny}.
Notice that $D$ is also a poset. The corresponding $e$-vector is $e=(-1,-2,2,1)$ and satisfies $\langle e,e \rangle=10$.
Now let $g$ be defined by $g(x_i)=i$ for all $i\in \{1,2,3,4\}$. Then
$g$ is an acyclic ordering of $D$ and $\langle e,g \rangle =5=\frac{1}{2}\langle e,e \rangle$. Moreover, $f=n+1-g+e$ is an acyclic ordering such that
$f(x_1)=3$, $f(x_2)=1$, $f(x_3)=4$ and $f(x_4)=2$. It is easily checked that $D=f\cap g$.
\end{exm}

\begin{figure}[h]
\begin{center}
\leavevmode \epsfxsize=2in \epsfbox{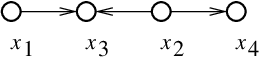}
\end{center}
\caption{Example for Theorem \ref{T2} \label{skinny}}
\end{figure}

The following result first appeared in \cite{bier}. It is now a consequence of Theorem \ref{T2} and the discussion before Theorem \ref{T1}.

\begin{cor}\label{thomas}Let $P$ be a poset. Then $P$ is of dimension at most two
if and only if $P$ has a linear extension $g$ satisfying the equality

\begin{equation*}
 \langle e_{_{D(P)}},g \rangle ~=~ \frac{1}{2}\langle e_{_{D(P)}},e_{_{D(P)}} \rangle~.
\end{equation*}

\end{cor}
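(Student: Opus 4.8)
\noindent The statement is a biconditional, and the plan is to obtain each direction directly from the two theorems already established. Throughout I identify $P$ with its comparability digraph $D(P)$, so that a linear extension of $P$ is precisely an acyclic ordering of $D(P)$ and the $e$-vector of $P$ is the $e$-vector of $D(P)$; note also that $D(P)$ is transitively closed, so the phrase ``$D(P)$ is a poset of dimension at most two'' means exactly ``$\dim P\le 2$.''

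One implication is immediate. Suppose $P$ has a linear extension $g$ with $\langle e,g\rangle=\frac{1}{2}\langle e,e\rangle$. Applying Theorem \ref{T2} to the acyclic digraph $D=D(P)$, with this ordering $g$ realizing the equality (\ref{E8'}), yields at once that $D(P)$, and hence $P$, has dimension at most two.

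For the converse I would assume $\dim P\le 2$ and write $P=f\cap g$ as an intersection of two linear extensions (taking $f=g$ when $P$ is a chain). The key step is to prove the pointwise identity
\[
 f(x)+g(x)=n+1+e(x)\qquad\text{for every } x\in V.
\]
To establish it I would fix $x$ and classify each $z\neq x$: if $z<x$ in $P$ then $z$ precedes $x$ in both $f$ and $g$; if $z>x$ it follows $x$ in both; and if $z$ is incomparable to $x$ then, because $P=f\cap g$ forces the two extensions to reverse every incomparable pair, exactly one of $f,g$ places $z$ before $x$. Counting the predecessors of $x$ in $f$ and in $g$ and summing then gives $f(x)+g(x)-2=2|N^-(x)|+\bigl(n-1-|N^-(x)|-|N^+(x)|\bigr)=e(x)+n-1$, which is the claimed identity. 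I expect the verification of the incomparable-pair reversal together with this bookkeeping to be the \textbf{main obstacle}, since everything after it is formal.

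Finally I would read the identity as the vector equation $f+g=(n+1)\mathbf{1}+e$, where $\mathbf{1}=\sum_{x\in V}x$. Pairing with $e$ and using $\langle e,\mathbf{1}\rangle=\sum_{x\in V}e(x)=0$ from (\ref{E3}) gives
\[
 \langle e,f\rangle+\langle e,g\rangle=\langle e,e\rangle .
\]
On the other hand, both $f$ and $g$ are acyclic orderings of $D(P)$, so Theorem \ref{T1} gives $\langle e,f\rangle\ge\frac{1}{2}\langle e,e\rangle$ and $\langle e,g\rangle\ge\frac{1}{2}\langle e,e\rangle$. Adding these two inequalities and comparing with the displayed equality forces both of them to be equalities; in particular $\langle e,g\rangle=\frac{1}{2}\langle e,e\rangle$, so $g$ is the required linear extension. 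This completes the plan.
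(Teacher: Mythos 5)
Your proposal is correct and follows essentially the same route as the paper: the ``equality implies dimension at most two'' direction is obtained by citing Theorem \ref{T2}, and the converse rests on the identity $f+g=(n+1)\mathbf{1}+e$ together with Theorem \ref{T1} applied to both extensions, forcing both inner products down to $\frac{1}{2}\langle e,e\rangle$. The only differences are to your credit: you actually prove the identity via the incomparable-pair counting argument, whereas the paper merely asserts ``it can be checked,'' and your convention $f=g$ for chains subsumes the paper's separately and explicitly computed total-order case.
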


Corollary \ref{thomas} gives a new characterization of posets of dimension two. We should
mention here that several other characterizations exist. In \cite{3} it was proved that a poset has dimension two if and only if
the complement of its comparability graph is a comparability graph. Other characterizations of
posets of dimension two can be found in \cite{bfr}.

\section{Proof of Theorem \ref{T1}}

We may consider for a given acyclic ordering $g$ the total sum of all its weights
induced on the arcs of $D$ and we find the expression
\begin{equation}  \label{EW}
  \sum_{(x,y)\in A}~ [g(y) - g(x)] ~=~ \sum_{x \in V} ~e_{_{D}}(x) \cdot g(x)  ~.
\end{equation}
This comes about by noticing that the left hand side sums over all arcs
and for each vertex $v \in V$ counts $+g(v)$ for each arc entering $v$ and
$-g(v)$ for each arc leaving $v$ for a total of $g(v)\cdot e(v)$. Sum over each
vertex $v$ to get the right hand side.

The proof of Theorem \ref{T1} goes by induction on the order $|V| =n$. First a lemma, already proved in \cite{bier} for posets. An element $x\in X$ is \emph{maximal} in a digraph $D=(X,A)$ if there is no $y\in V$ such that $(x,y)\in A$ i.e. $N^+(x)=\emptyset$.

\begin{lem}\label{lem1} Let $D_1=(X,A_1)$ be an acyclic digraph and let $g$ be an acyclic ordering of $D_1$.
Let $z\in X$ be a maximal element of $D_1$ and let $m=|N^-(z)|$. Then

\begin{equation} \label{E11}
 \sum_{x \in N^-(z)} [e_{_{D_{1}}}(x) - g(x)] + n \cdot m ~\geq ~  \binom m2 ~.
\end{equation}
\end{lem}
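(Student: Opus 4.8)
The plan is to bound the contribution of each in-neighbour of $z$ to the left-hand side of (\ref{E11}) separately, after sorting $N^-(z)$ by its $g$-values. Write $n=|X|$ and label $S:=N^-(z)=\{x_1,\dots,x_m\}$ so that $g(x_1)<g(x_2)<\cdots<g(x_m)$. Since $g$ is an acyclic ordering and each $x_i$ is joined to $z$ by the arc $(x_i,z)$, we have $g(x_i)<g(z)$ for every $i$; in particular $z$ sits strictly above all of $S$, and any out-neighbour of $x_i$ lying in $S$ must, by the sorting, be among $x_{i+1},\dots,x_m$.

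The heart of the argument is a position bound for each $x_i$ that already absorbs its out-degree. For a fixed $x_i$, every vertex it points to receives a strictly larger $g$-value; among these out-neighbours I would separate $z$, the out-neighbours lying in $S$, and the remaining \emph{external} out-neighbours $O_i:=N^+(x_i)\setminus(S\cup\{z\})$, of cardinality $o_i$. The vertices $x_{i+1},\dots,x_m$, the vertex $z$, and the $o_i$ vertices of $O_i$ are pairwise distinct and all carry $g$-values larger than $g(x_i)$, which forces $g(x_i)\le n-\bigl((m-i)+1+o_i\bigr)$. On the other hand $|N^+(x_i)|=1+a_i+o_i$, where $a_i:=|N^+(x_i)\cap S|$, so $e(x_i)=|N^-(x_i)|-1-a_i-o_i$. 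Substituting both estimates into $e(x_i)-g(x_i)+n$, the two occurrences of $o_i$ cancel and I obtain $e(x_i)-g(x_i)+n\ge |N^-(x_i)|-a_i+(m-i)$.

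It then remains to sum over $i$. The arithmetic term gives $\sum_{i=1}^m (m-i)=\binom m2$, which is exactly the right-hand side of (\ref{E11}), so the whole inequality reduces to $\sum_{i=1}^m\bigl(|N^-(x_i)|-a_i\bigr)\ge 0$. This is immediate: $\sum_i a_i=\sum_i|N^+(x_i)\cap S|$ counts precisely the arcs of $D_1$ with both endpoints in $S$, whereas $\sum_i|N^-(x_i)|$ counts all arcs of $D_1$ with head in $S$, a family containing the former; hence the difference is nonnegative and (\ref{E11}) follows.

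The step I expect to be the main obstacle — and the only genuinely nontrivial idea — is the position bound in the second paragraph. A naive estimate such as $g(x_i)\le n-1$, using only that each $x_i$ precedes $z$, is too weak: it discards the out-degree of $x_i$, and since a vertex with large out-degree can make $e(x_i)$ very negative, the resulting bound fails. The point is that the very out-neighbours responsible for a negative $e(x_i)$ also sit strictly above $x_i$ and therefore depress $g(x_i)$ by the same amount; folding the external out-neighbours $O_i$ into the count of vertices above $x_i$ is exactly what makes $o_i$ cancel and keeps the estimate tight. Care is needed to verify that $x_{i+1},\dots,x_m$, $z$ and $O_i$ are genuinely disjoint so that their sizes add, and that an out-neighbour of $x_i$ inside $S$ cannot have a smaller index — both of which follow from the sorting of $S$ by $g$-value.
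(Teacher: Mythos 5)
Your proof is correct, and at bottom it performs the same count as the paper's, but organized termwise rather than through the paper's global device, and with bookkeeping that is tighter at the one point where the paper's own write-up is loose. The paper works with the \emph{insertion pairs} of $S=\{g(x):x\in N^-(z)\}\subseteq[1,n]$: it bounds their number $k$ from above by $\sum_{i=1}^m[(n-s_i)-(m-i)]$ (inequality (\ref{E12})), from below by the out-arcs of $N^-(z)$, and converts the lower bound into $\sum_{x\in N^-(z)}e(x)\ge -k$. Your position bound $g(x_i)\le n-(m-i)-1-o_i$ is precisely the statement that the insertion pairs with first coordinate $s_i=g(x_i)$ number at least $1+o_i$ and at most $(n-s_i)-(m-i)$, so summing over $i$ recovers (\ref{E13}) and (\ref{E16}). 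What your version buys is an honest treatment of arcs lying inside $N^-(z)$: the paper asserts that $k$ is an upper bound for $\sum_{x\in N^-(z)}|N^+(x)|$, which is false as stated whenever $N^-(z)$ spans an arc, since such an arc is counted in the sum but yields no insertion pair (take $X=\{a,b,z\}$ with arcs $(a,b),(a,z),(b,z)$ and $g=(1,2,3)$: then $k=2$ while the sum is $3$). The paper's conclusion survives only because arcs internal to $N^-(z)$ cancel inside $\sum_{x\in N^-(z)}e(x)$ — a fact its opening sentence records but never actually invokes — whereas your explicit $a_i=|N^+(x_i)\cap S|$, discarded at the end via $\sum_i\bigl(|N^-(x_i)|-a_i\bigr)\ge 0$ (the number of arcs entering $N^-(z)$ from outside), is exactly the correct form of that cancellation. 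One last remark: the lemma as stated leaves ambiguous what $n$ is and in which digraph $e$ is computed; your reading ($n=|X|$, $e$ taken in $D_1$) is the one consistent with the lemma's standalone statement, and it is equivalent to the form invoked in (\ref{E24}), after the shift $e(x)\mapsto e(x)+1$ on $N^-(z)$ that accompanies the deletion of $z$.
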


\begin{proof}For the proof of the lemma some preliminary considerations are useful: \\
For a subset $S \subseteq [1,n] $ of the integer interval and its set complement
$ T = [1,n] \setminus S $ we call any ordered pair $ (s,t) $ with $s<t$ and
$s \in S~,~t \in T$ an { \it insertion pair} of $S.$ Then if the subset
$ S = \{ s_1,s_2,...,s_m \} $ has exactly $k_S$ insertion pairs, we find that
there are exactly $(n-s_i)-(m-i)$ insertion pairs $(s_i,t)$ and hence
\begin{equation} \label{E12}
 k_S =  \sum_{i=1}^m ~ [(n-s_i) - (m-i)] ~.
\end{equation}
Thus we obtain an equality for the sum over the set $S$ as
\begin{equation} \label{E13}
 \sum_{i=1}^m ~s_i ~= ~ \frac{m(2n-m+1)}{2} ~-~k_S ~= ~ n\cdot m ~-~ \binom m2 ~-~k_S ~.
\end{equation}
This remark is now applied in the situation of the lemma. Choosing $S=\{g(x) : x\in N^-(z)\}$ from (\ref{E13}) we obtain
\begin{equation*}
\sum_{x\in N^-(z)} ~g(x) ~= ~ n\cdot m ~-~ \binom m2 ~-~k_S ~.
\end{equation*}
and hence
\begin{equation*}
n\cdot m ~+~ \sum_{x\in N^-(z)} ~[e_{_{D_{1}}}(x)-g(x)] ~= ~ \binom m2 ~+~k_S ~+~ \sum_{x\in N^-(z)} e_{_{D_1}}(x)~.
\end{equation*}

The sum $  \sum_{x \in N^-(z)} ~e_{_{D_1}}(x) $ counts the difference of the number of arcs
going into $N^-(z)$ and of the number of arcs coming out of $N^-(z).$ On the other hand $k_S$ is at least the number of arcs from $N^-(z)$ to its complement. It follows then that $k_S ~+~ \sum_{x\in N^-(z)} e_{_{D_1}}(x) ~\geq ~ 0$. The required inequality follows.
\end{proof}

We now proceed to the proof of Theorem \ref{T1}.

\begin{proof} (Of Theorem \ref{T1})
Clearly the result holds for the acyclic digraph of one element.
Assume that the result is known for all acyclic digraphs of size $n$ and that we want
to show it for an acyclic digraph with $n+1$ elements. Denote such an acyclic digraph by $D_1=(X,A_1)$ so that
$|X| = n+1 $ and let $G : X \rightarrow [1,n+1]$ be an acyclic ordering of the
acyclic digraph $D_1$. Then let $z \in X$ be the unique element with $G(z)=n+1.$
Let $V=X\setminus \{z\}$ and consider the acyclic digraph $D:=D_1[V]=(V,A)$. Clearly the restriction of $G$ to $D$ defines an acyclic ordering $g:V \rightarrow [ 1,n] $. Then we clearly have
\begin{equation} \label{E19}
  e_{D}(x) = \left\{
\begin{array}{ccc}
e_{D_1}(x) & \mbox{ if } & x \not\in N^-(z), \\
e_{D_{1}}(x)+1 & \mbox{ if } & x \in N^-(z). \end{array} \right.
\end{equation}
Note that in particular
\begin{equation} \label{E20}
 \sum_{x \in N^-(z)} ~e_{D}(x)  ~=~ \sum_{x \in N^-(z)} ~e_{D_1}(x) ~+~ |N^-(z)|.
\end{equation}

By the inductive assumption for the digraph $D$ we have:
\begin{equation} \label{E21}
  \langle g,e_D \rangle ~\geq ~ \frac{1}{2} \langle e_D,e_D\rangle.
\end{equation}
The quantity $\langle e_{D_1},e_{D_1} \rangle$ by (\ref{E19}) works out to be
\begin{eqnarray*} \label{E22}
\langle e_{D_1},e_{D_1} \rangle &=& \sum_{x\in X}{e_{D_1}(x)}^2\\
                      &=&  \sum_{x\in V}{e_{D_1}(x)}^2  + {e_{D_1}(z)}^2\\
                      &=& \sum_{x\not \in N^-(z)}{e_{D_1}(x)}^2 + \sum_{x \in N^-(z)}{e_{D_1}(x)}^2 + {|N^-(z)|}^2\\
                      &=& \sum_{x\not \in N^-(z)}{e_D(x)}^2 + \sum_{x \in N^-(z)}{(e_D(x)-1)}^2 + {|N^-(z)}|^2\\
                      &=& \langle e_D,e_D \rangle - 2 \sum_{x \in N^-(z)} ~e_D(x)  + |N^-(z)| + {|N^-(z)}|^2\\
                      &=& \langle e_D,e_D \rangle - 2  \sum_{x \in N^-(z)} ~e_D(x)  +2\,[|N^-(z)| + \binom {|N^-(z)|}2]
\end{eqnarray*}

so that
\begin{eqnarray} \label{E33}
\frac{1}{2} \langle e_D,e_D \rangle &=& \frac{1}{2}\langle e_{D_1},e_{D_1} \rangle_X + \sum_{x \in N^-(z)} ~e_D(x) - |N^-(z)| -\binom {|N^-(z)|}2
\end{eqnarray}
and where we have
\begin{eqnarray*}  \label{E23}
\langle G,e_{D_1}\rangle &=&\sum_{x\in X}G(x)\cdot e_{D_1}(x)\\
        &=& \sum_{x\in V} G(x)\cdot e_{D_1}(x) + G(z)\cdot e_{D_1}(z)\\
        &=& \sum_{x\not \in N^-(z)} g(x)\cdot e_D(x)  + \sum_{x \in N^-(z)} g(x)\cdot (e_D(x)-1)+(n+1)|N^-(z)|\\
        &=&  \langle g,e_D\rangle - \sum_{x \in N^-(z)} ~g(x)  + (n+1)\cdot |N^-(z)|
\end{eqnarray*}

so that
\begin{eqnarray} \label{E34}
\langle g,e_D\rangle &=& \langle G,e_{D_1}\rangle + \sum_{x \in N^-(z)} ~g(x)  - (n+1)\cdot |N^-(z)|
\end{eqnarray}

so that from (\ref{E21})
\begin{equation} \label{E24}
  \langle G,e_{D_1} \rangle \geq \frac{1}{2} \langle e_{D_1},e_{D_1} \rangle + \sum_{x \in N^-(z)} ~[e_D(x)-g(x)] + n\cdot |N^-(z)| - \binom {|N^-(z)|}2.
\end{equation}
We then easily see that the inequality in question (for $G$ and $e_{D_1}$) follows from Lemma 1.
\end{proof}

\section{Characterization of acyclic digraphs satisfying equality: A proof of Theorem \ref{T2}}

The proof of Theorem \ref{T2} is by induction on the order $|V| =n$. The following lemma is then essential.

\begin{lem}  \label{L3}Assume that $D_1=(X,A_1)$ is an acyclic digraph and $G : X \rightarrow [1,n+1]$ is an acyclic ordering that satisfies the equality
\begin{equation} \label{E25}
  \langle G,e_{D_1} \rangle_X ~= ~ \frac{1}{2} \langle e_{D_1},e_{D_1}\rangle.
\end{equation}
Let $z\in X$ be the unique element with $G(z)=n+1$ and let $V=X\setminus \{z\}$. Then the restriction $g:=G\restriction V$ satisfies the equality
\begin{equation} \label{E26}
  \langle g,e_D \rangle ~= ~ \frac{1}{2} \langle e_D,e_D\rangle.
\end{equation}
\end{lem}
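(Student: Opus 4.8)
The plan is to show that the chain of estimates used to prove Theorem \ref{T1} is, once unwound, an \emph{exact} decomposition of the ``gap'' $\langle G,E\rangle_X-\tfrac12\langle E,E\rangle_X$ into a sum of two quantities, each nonnegative for an independent reason; the hypothesis (\ref{E25}) then forces both to vanish, and one of them is precisely the gap for $D$. So the whole point is to observe that the proof of Theorem \ref{T1} really proves something stronger than an inequality.

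First I would record the structural facts already assembled in the proof of Theorem \ref{T1}. Since $G(z)=n+1$ is the largest value of the acyclic ordering, $z$ has no out-arc, hence $z$ is maximal in $D_1$ and $N^-(z)\subseteq V$; the passage from $E$ to $e$ is governed by (\ref{E19}) and (\ref{E20}), and the quantities $\langle e,e\rangle_V$ and $\langle G,E\rangle_X$ are given by the identities (\ref{E22}) and (\ref{E23}). Crucially, none of these relations involves any inequality.

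Next I would repeat the computation of the proof of Theorem \ref{T1}, but keep every step as an equality instead of passing to an inequality. Substituting (\ref{E22}), (\ref{E20}) and (\ref{E23}) I expect to obtain, with $m=|N^-(z)|$, the exact identity
\[
 \langle G,E\rangle_X-\tfrac12\langle E,E\rangle_X
 =\Bigl(\langle g,e\rangle_V-\tfrac12\langle e,e\rangle_V\Bigr)
 +\Bigl(\textstyle\sum_{x\in N^-(z)}[e(x)-g(x)]+mn-\binom{m}{2}\Bigr).
\]
Write $A$ and $B$ for the two bracketed summands. By Theorem \ref{T1} applied to the smaller digraph $D$ we have $A\ge 0$, and by Lemma \ref{lem1}, applied exactly as in the proof of Theorem \ref{T1}, we have $B\ge 0$. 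This is the same content as the displayed inequality (\ref{E24}), only now tracked as an identity.

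Finally, the hypothesis (\ref{E25}) says the left-hand side is $0$, so $A+B=0$ with $A\ge 0$ and $B\ge 0$, whence $A=0$; and $A=0$ is precisely (\ref{E26}). The only genuine work lies in the middle step: one must verify that in passing from (\ref{E23}) to (\ref{E24}) the proof of Theorem \ref{T1} invoked no inequality other than the inductive bound (which contributes $A$) and Lemma \ref{lem1} (which contributes $B$), so that the gap splits cleanly and additively. I expect this to be pure bookkeeping rather than a real obstacle; the concluding implication is just the standard principle that a sum of nonnegative terms which vanishes must have every term equal to zero.
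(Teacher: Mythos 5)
Your proposal is correct and is essentially the paper's own proof: the paper establishes the same decomposition as a chain $\langle G,E\rangle_X \geq \cdots \geq \tfrac12\langle E,E\rangle_X = \langle G,E\rangle_X$ whose two inequality steps are exactly your terms $A$ (Theorem \ref{T1} applied to $D$) and $B$ (Lemma \ref{lem1}), and equality is forced throughout. Your repackaging of that sandwich as the exact identity ``gap $=A+B$ with $A,B\geq 0$'' is the same argument in different clothing, and the identity you conjecture does check out against (\ref{E22}), (\ref{E20}) and (\ref{E23}).
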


\begin{proof}If equality (\ref{E25}) holds, then from equalities (\ref{E33}) and (\ref{E34}) we deduce that
\begin{eqnarray*}
0\leq \langle g,e_D \rangle -\frac{1}{2} \langle e_D,e_D\rangle= -[\sum_{x \in N^-(z)} ~[e_D(x)-g(x)] + n\cdot |N^-(z)| - \binom {|N^-(z)|}2]\leq 0.
\end{eqnarray*}
The first inequality follows from Theorem \ref{T1} and the last inequality follows from Lemma~\ref{lem1}.
\end{proof}

Note that under the given assumptions as in the previous lemma the set $N^-(z)$ has the property

\begin{equation}
 \sum_{x \in N^-(z)} [e_D(x) - g(x)] + n \cdot |N^-(z)| ~= ~  \binom {|N^-(z)|}{2} ~.
\end{equation}

Note that if $f:= n+1-g+e_D$, then
\begin{eqnarray*}
\binom {|N^-(z)|}{2} & = & \sum_{x \in N^-(z)} [f(x)-(n+1)] + n \cdot |N^-(z)|\\
                     & = & \sum_{x \in N^-(z)} f(x) -(n+1)\cdot |N^-(z)| + n \cdot |N^-(z)|\\
                     & = & \sum_{x \in N^-(z)} f(x) - |N^-(z)|.
\end{eqnarray*}

and hence
\begin{equation}\label{E31}
\sum_{x \in N^-(z)} f(x)=\frac{|N^-(z)|(|N^-(z)|+1)}{2}.
\end{equation}

Moreover, if $f$ is one-to-one, then the images under $f$ of the set $N^-(z)$ are the numbers in the interval $[1,|N^-(z)|]$ (this follows from Lemma \ref{sum}).

\begin{lem}\label{sum} Let $0<a_1<a_2<...<a_m$ be integers such that $\sum_{i=1}^{m}a_i=\frac{m(m+1)}{2}$. Then $a_i=i$.
\end{lem}
\begin{proof}Straightforward.
\end{proof}

We now proceed to the proof of Theorem \ref{T2}

\begin{proof} (Of Theorem \ref{T2}.) Let $D=(V,A)$ be an acyclic digraph with $n=|V|$
satisfying the conditions of the theorem. The proof is by induction on $n$. For $n=1$ all conclusions are trivially satisfied.\\
Assume as an inductive hypothesis that if the equality
$\langle g,e_D \rangle = \frac{1}{2} \langle e_D,e_D\rangle$ holds, then
$D$ is the comparability digraph of a poset of dimension two, $f=n+1-g+e_D$ is an acyclic ordering of $D$
and $D=f\cap g$. For the inductive step let $D_1=(X,A_1)$ be an acyclic digraph for which there exists an acyclic ordering
$G : X \rightarrow [1,n+1]$ that satisfies $\langle G,e_{D_1} \rangle = \frac{1}{2} \langle e_{D_1},e_{D_1}\rangle$. Let $F=n+2-G+e_{D_1}$.
Let $z$ be the unique element with $G(z)=n+1$ and set $D:=D_1[X\setminus \{z\}]$.
By Lemma \ref{L3} the restriction $g:=G_{\restriction D}$ satisfies the equality $\langle g,e_D \rangle = \frac{1}{2} \langle e_D,e_D\rangle$. Hence the inductive hypothesis applies to $D$. Note that
\begin{equation}\label{E28}
F(z)= n+2-G(z)+e_{D_1}(z)=n+2-(n+1)+|N^-(z)|=|N^-(z)|+1.
\end{equation}

We now verify that $F$ is an acyclic ordering of $D_1$ and that $D_1 =F\cap G$.

We first verify that $F$ is well defined by showing that $0<F(x)\leq n+1$ for all $x\in X$.

$F(x)=n+2-G(x)+e_{D_1}(x)=n+2-(G(x)-|N^-(x)|)-|N^+(x)|$. As $G(x)> |N^-(x)|$ it follows that $F(x)\leq n+1$.

The number $G(x)-|N^-(x)|$ counts a certain set of elements $M$ which
are outside $|N^+(x)|$ because $G$ is an acyclic ordering, and which are outside $|N^-(x)|$ because we have

\[ M\subseteq \{y\in X : G(y)\not\in \{G(t) : t\in N^-(x)\}\}. \]

As $M \cap N^+(x) =\emptyset$ we get $G(x)-|N^-(x)|+|N^+(x)|=|M|+|N^+(x)|\leq |X|=n+1$.
This verifies that $F$ is well defined.

As

\begin{equation}\label{E30}
    e_{D}(x)=\left\{ \begin{array}{ccc} e_{D_1}(x)+1 & \mbox{ if } & x\in N^-(z), \\
                                    e_{D_1}(x) & \mbox{ if } & x\not \in N^-(z),
                 \end{array}\right.
\end{equation}

we have

\begin{equation}\label{E27}
    F_{\restriction V} (x)=\left\{ \begin{array}{ccc} f(x) & \mbox{ if } & x\in N^-(z) \\
                                    f(x) +1 & \mbox{ if } & x\not \in N^-(z).
                 \end{array}\right.
\end{equation}

Since $f$ is one-to-one it follows from (\ref{E31}) that the images under $f$ of the set $N^-(z)$ are the numbers in the interval $[1,|N^-(z)|]$. Hence, the images of the complement of $N^-(z)$ in $V$ are the numbers in the interval $[1+|N^-(z)|,n]$. From (\ref{E27}) we deduce that the images under $F$ of the set $N^-(z)$ are the numbers in the interval $[1,|N^-(z)|]$, and the images of the complement of $N^-(z)$ in $V$ are the numbers in the interval $[2+|N^-(z)|,n+1]$. From (\ref{E28}) we deduce that $F$ is injective and hence bijective.

Next we verify that $F$ is an acyclic ordering. Let $(x,y)\in A$.
For the two cases where $x,y \in N^-(z)$ or $x,y \not \in N^-(z)\cup\{z\}$ the
fact that $F(x)<F(y)$ follows from (\ref{E27}) and our assumption that $f$ is an acyclic ordering of $D$.
In case $x\in N^-(z)$ and $y \not \in N^-(z)\cup \{z\}$ the
fact that $F(x)<F(y)$ follows from $F(x)=f(x)$ and $F(y)=f(y)+1$.
The case $x\not \in N^-(z)$ and $y \in N^-(z)$ cannot occur because the images under $f$ of the set $N^-(z)$ are the numbers in the interval $[1,|N^-(z)|]$.

The case $(x,z)\in A_1$ is also clear for the same reason: $F(x)=f(x)<|N^-(z)|+1=F(z)$.
This verifies that $F$ is an acyclic ordering of $D_1$.  Finally we now have to verify that
$D_1=F\cap G$. Since $D=f\cap g$ by inductive assumption, it is enough to check that if $(x,z)\not \in A_1$, then $F(x)>F(z)$ which follows from the fact that the images under $F$ of the complement of $N^-(z)$ in $X$ are the numbers in the interval $[1+|N^-(z)|,n+1]$ and $F(z)=|N^{-1}(z)|+1$. This verifies that $D_1=F\cap G$.

Hence, $D_1$ is the comparability digraph of a poset of order dimension two.
\end{proof}

\section{Acknowledgment}

The authors are grateful to an anonymous referee for the careful reading of the manuscript and for the detailed comments which have greatly improved the presentation of the paper.

\end{document}